\newtheorem{theorem}{Theorem}[section]
\newtheorem{corollary}[theorem]{Corollary}
\newtheorem{lemma}[theorem]{Lemma}
\newtheorem{proposition}[theorem]{Proposition}
\newtheorem*{problem}{Problem}
\newtheorem*{mainthm}{Main Theorem}
\theoremstyle{definition}
\newtheorem{step}{Step}
\newtheorem*{xrem}{Remark}
\numberwithin{equation}{section}
\newcommand{\Set}[2]{
\left\{\begin{array}{c|c}
\displaystyle#1&\displaystyle#2
\end{array}\right\}}
\newcommand{\id}{\mathrm{id}}
\newcommand{\R}{\mathbb{R}}
\newcommand{\M}{\mathcal{M}}
\newcommand{\K}{\mathcal{K}}
\newcommand{\dGH}{d_{\mathrm{GH}}}
\newcommand{\dist}{\mathop{\mathrm{dist}}\nolimits}
\newcommand{\diam}{\mathop{\mathrm{diam}}\nolimits}
\newcommand{\h}{\text{-}}
\begin{document}

%%%%% To ease editing, for IMPAN journals add:

\baselineskip=17pt

%%%%%%%%%%%%%%%%

\title{An Isometric Embedding of a Bounded Set in a Euclidean Space into the Gromov-Hausdorff Space}

\author{Takuma Byakuno\\
Institute of Mathematics\\ 
Graduate School of Science and Engineering, Kansai University\\
3-3-35 Suita, 564-8680 Osaka, Japan\\
E-mail: k241344@kansai-u.ac.jp}

\date{}

\maketitle

%% Classification and key words; note that the 2010 classification is used:

\renewcommand{\thefootnote}{}

\footnote{2020 \emph{Mathematics Subject Classification}: Primary 30L05; Secondary 54E35, 53C23.}

\footnote{\emph{Key words and phrases}: Gromov-Hausdorff space, Isometric embeddings.}

\renewcommand{\thefootnote}{\arabic{footnote}}
\setcounter{footnote}{0}

%%%%%%%%

\begin{abstract}
We construct an isometric embedding of a bounded set in a Euclidean space into the Gromov-Hausdorff space.
In particular, we can embed a bounded and connected Riemannian manifold into the Gromov-Hausdorff space by a bilipschitz map.
\end{abstract}

\section{Introduction}
In 1909, Fr\'{e}chet \cite[pp.161--162]{Frechet} showed that an arbitrary separable metric space can be isometrically embedded into the set $l^\infty$, 
which consists of all real valued and bounded sequences $(x_n)_{n=1}^\infty=(x_1,x_2,\ldots,x_n,\ldots)$ and is equipped with the supremum distance:
\[
\mathrm{the\ distance\ between}\ (x_n)_{n=1}^\infty\ \mathrm{and}\ (y_n)_{n=1}^\infty:
\sup_{n=1,2,\ldots}|x_n-y_n|.
\]
Indeed, we can define the isometric embedding of a separable metric space $(X,d)$ into $l^\infty$ by:
\[
x\mapsto (d(x,x_n)-d(x_n,x_1))_{n=1}^\infty
\hspace{2em}
\mathrm{for}\ x\in X,
\]
%$x\mapsto (d(x,x_n)-d(x_n,x_1))_{n=1}^\infty$ for $x\in X$, 
where $\{x_1,x_2,\ldots\}$ is a dense subset of $X$.\\

In 1927, Urysohn \cite{Urysohn} constructed the separable complete metric space $U$ satisfying the following:
\begin{itemize}
\item
For an arbitrary separable metric space $X$, there exists a suitable subspace $M$ of $U$ such that $M$ is isometric to $X$,
\item
An arbitrary surjective isometry between finite subsets of $U$ can be extended to an isometry of $U$ onto itself.
\end{itemize}
In particular, all separable complete metric spaces satisfying the above properties 
are isometric to one another. This unique space is called the {\it Urysohn universal space}.\\

In 1932, Banach \cite[p.187]{Banach} showed that an arbitrary separable metric space can be isometrically embedded into the set $C([0,1])$, which consists of all continuous functions on the closed interval $[0,1]$ and is equipped with the supremum distance:
\[
\mathrm{the\ distance\ between}\ f\ \mathrm{and}\ g:
\sup_{0\leq t\leq 1}|f(t)-g(t)|.
\]
This fact was proved as an application of Fr\'{e}chet's isometric embedding and Banach-Mazur theorem \cite[p.185]{Banach}, which states that every Banach space can be isometrically embedded into $C([0,1])$. However $C([0,1])$ is not isometric to the Urysohn universal space according to Mazur-Ulam theorem, which states that all surjective isometries between normed spaces are affine.\\

From viewpoints of the above works, we can have the natural problem: what is the smallest metric space that contains an isometric copy of each metric spaces satisfying an arbitrary fixed property?\\

In 2017, Iliadis, Ivanov and Tuzhilin \cite{Tuzhilin1} showed that all finite metric spaces can be isometrically embedded into the Gromov-Hausdorff space.
The {\it Gromov-Hausdorff space} is a metric space consisting of a certain set $\M$ charactrized as follows and Gromov-Hausdorff distance $\dGH$, which is a distance function on $\M$ and defined later:
\begin{itemize}
\item
For every $X\in\M$, $X$ is a compact metric space,
\item
For an arbitrary compact metric space $X$,
we can find a suitable $Y\in\M$ such that $Y$ is isometric to $X$,
\item 
For every $X,Y\in\M$, we obtain $X=Y$ if $X$ is isometric to $Y$.
\end{itemize}
Although $\M$ is usually defined as the set of isometry classes of all compact metric spaces, we consider $\M$ as a set with these properties to avoid set-difficulty in this paper.
For instance, by means of Frechet's isometric embedding, a system consisting of suitable subsets of $l^\infty$ satisfies these properties. 
The Gromov-Hausdorff space $\M$ has a dense countable set consisting of isometric copies of all finite metric spaces endowed with rational valued distance functions and therefore $\M$ is separable. 
Moreover we can show that $\M$ is complete, but $\M$ is not isometric to the Urysohn universal space. See\cite[Section 3]{Tuzhilin1} and \cite[Main Theorem]{Tuzhilin2}.\\

According to \cite{Tuzhilin1}, there are many open problems about geometrical properties of $\M$. For instance, in this paper, we focus on:
\begin{problem}
Can we isometrically embed all compact metric spaces  into the Gromov-Hausdorff space $\M$? 
\end{problem}
\noindent Iliadis, Ivanov and Tuzhilin's result can be considered as a partial answer of this problem.\\

In this paper, we will prove the next theorem, which is generalization of their result and also considered as a partial answer of this problem:
\begin{mainthm}
%\label{mainthm}
Let $A$ be a metric subspace of the Euclidean space $\R^N\ (N=1,2,\ldots)$ equipped with the Chebyshev distance. 
If $A$ is bounded, then $A$ can be isometrically embedded into the Gromov-Hausdorff space $(\M, \dGH)$.
\end{mainthm}

\section{Preliminaries}
In this paper, we equip the Euclidean space $\R^N\ (N=1,2,\ldots)$
with the {\it Chebyshev distance} $d^N$ unless explicitly stated otherwise, which is a distance function on $\R^N$ and defined by the following:
\[
d^N((x_n)_{n=1}^N,(y_n)_{n=1}^N)=\max_{n=1,2,\ldots,N}|x_n-y_n|\hspace{2em}\mathrm{for}\ (x_n)_{n=1}^N,(y_n)_{n=1}^N\in\R^N.
\]
In particular, 
we define 
the distance between a point and a set, 
the distance between a set and a set 
and the diameter of a set in $\R^2$
as:
\begin{align*}
\mathrm{point\h set\ distance}:\ 
&&\dist(p,A)=&\inf_{x\in A}d^2(p,x),\\
\mathrm{set\h set\ distance}:\ 
&&\dist(A,B)=&\inf_{x\in A,y\in B}d^2(x,y),\\
\mathrm{diameter}:\ &&\diam(A)=&\sup_{x,y\in A}d^2(x,y)
\end{align*}
for $p\in\R^2$ and $A,B\subset\R^2$ with $A,B\neq\emptyset$, respectively.\\

For an arbitrary metric space $(Z,d_Z)$, the {\it Hausdorff distance} $Hd_Z$ of the metric space is defined by the following:
\[
Hd_Z(A,B)
=\max\left\{\sup_{x\in A}\inf_{y\in B}d_Z(x,y),\sup_{y\in B}\inf_{x\in A}d_Z(x,y)\right\}
\]
for $A,B\subset Z$\ with $A,B\neq\emptyset$.
Then we can define the {\it Gromov-Hausdorff distance} $\dGH$ by the following:
\[
\dGH(X,Y)=\inf_{(Z,\varphi,\psi)} Hd_Z(\varphi[X],\psi[Y])
\hspace{2em} \mathrm{for}\ X,Y\in\M,
\]
where $\varphi[X]$ and $\psi[Y]$ are the images of $X$ and $Y$ under $\varphi$ and $\psi$, respectively and the infimum is taken over all metric spaces $(Z,d_Z)$ and two isometric embeddings $\varphi:X\rightarrow Z$ and $ \psi:Y\rightarrow Z$.
\\

For an arbitrary positive number $\varepsilon>0$ and two metric spaces $(X,d_X)$ and $(Y,d_Y)$, a map $F:X\rightarrow Y$ is called an {\it $\varepsilon$-isometry} if $F$ satisfies:
\begin{itemize}
\item
{\it $\varepsilon$-isometric} : 
$|d_X(p,q)-d_Y(F(p),F(q))|\leq\varepsilon$\hspace{2em}for every $p,q\in X$,
\item 
{\it $\varepsilon$-surjective} : 
For an arbitrary $y\in Y$, we can find a suitable $x\in X$ such that $d_Y(F(x),y)\leq\varepsilon$.
\end{itemize}

We shall use the following proposition. 
See \cite[p.258, Corollary 7.3.28]{BBI}.

\begin{proposition}
\label{GHmap}
For $X,Y\in\M$ and $\varepsilon>0$ with $\dGH(X,Y)<\varepsilon$, 
there exist a positive number $\delta<\varepsilon$ and a $2\delta$-isometry $F:X\rightarrow Y$.
\end{proposition}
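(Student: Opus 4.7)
The plan is to unfold the definition of the Gromov--Hausdorff distance, place isometric copies of $X$ and $Y$ in a single metric space $Z$ whose Hausdorff distance is strictly smaller than $\varepsilon$, and then define $F$ by sending each $x\in X$ to a point of $Y$ whose image in $Z$ lies close to $\varphi(x)$.

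First I would fix a number $\delta$ with $\max\{\dGH(X,Y),\,0\}<\delta<\varepsilon$; in particular $\delta>0$ and $\delta>\dGH(X,Y)$. By the definition of $\dGH$ as an infimum, this choice yields a metric space $(Z,d_Z)$ and isometric embeddings $\varphi\colon X\to Z$, $\psi\colon Y\to Z$ with $Hd_Z(\varphi[X],\psi[Y])<\delta$. From the definition of the Hausdorff distance, for every $x\in X$ the value $\inf_{y\in Y} d_Z(\varphi(x),\psi(y))$ is strictly less than $\delta$, so I may select some $F(x)\in Y$ with $d_Z(\varphi(x),\psi(F(x)))<\delta$; this specifies the map $F\colon X\to Y$.

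Next I would verify the two defining properties of a $2\delta$-isometry. For $2\delta$-isometricity, the triangle inequality in $Z$ together with the fact that $\varphi,\psi$ are isometric embeddings gives, for any $p,q\in X$,
\[
d_Y(F(p),F(q))\leq d_Z(\psi(F(p)),\varphi(p))+d_X(p,q)+d_Z(\varphi(q),\psi(F(q)))<d_X(p,q)+2\delta,
\]
and a symmetric computation yields $d_X(p,q)<d_Y(F(p),F(q))+2\delta$; together these give $|d_X(p,q)-d_Y(F(p),F(q))|<2\delta$. For $2\delta$-surjectivity, given $y\in Y$ the other half of the Hausdorff bound yields $x\in X$ with $d_Z(\varphi(x),\psi(y))<\delta$, whereupon
\[
d_Y(y,F(x))=d_Z(\psi(y),\psi(F(x)))\leq d_Z(\psi(y),\varphi(x))+d_Z(\varphi(x),\psi(F(x)))<2\delta.
\]

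The argument is essentially bookkeeping; the only delicate point is ensuring that the intermediate $\delta$ is simultaneously strictly positive, strictly less than $\varepsilon$, and strictly greater than $\dGH(X,Y)$, so that the Hausdorff bound can be realized with strict inequality and the pointwise selection defining $F(x)$ is possible for every $x$. Once $\delta$ has been positioned correctly, the remainder amounts to one application each of the triangle inequality in $Z$ for the two halves of the definition of a $2\delta$-isometry.
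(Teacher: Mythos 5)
Your proof is correct: the choice of $\delta$ strictly between $\dGH(X,Y)$ and $\varepsilon$, the selection of a realizing triple $(Z,\varphi,\psi)$ with $Hd_Z(\varphi[X],\psi[Y])<\delta$, and the two triangle-inequality estimates establish exactly the $2\delta$-isometric and $2\delta$-surjective properties as defined in the paper. The paper itself does not prove this proposition but cites \cite[p.258, Corollary 7.3.28]{BBI}, and your argument is precisely the standard one underlying that reference, so there is nothing to flag.
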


\section{Proof of Main Theorem}% \ref{mainthm}}
In this section, we shall prove Main Theorem and show some corollaries.

\begin{proof}[Proof]% of Main Theorem]
Let $A$ be a bounded subspace of $\R^N$. 
In particular, we may assume there exists $M>0$ such that
\begin{equation}
\label{assumption}
0\leq x_n\leq M\hspace{2em}\mathrm{for\ every}\ n=1,2,\ldots,N\ \mathrm{and}\ (x_n)_{n=1}^N\in A
\end{equation}
by using a translation, which preserves distance. 
Under this assumption,
\begin{align*}
d^N((x_n)_{n=1}^N,(y_n)_{n=1}^N)
=&\max_{n=1,2,\ldots,N}|x_n-y_n|
=\max_{n=1,2,\ldots,N}\max\{x_n-y_n,y_n-x_n\}\\
\leq&\max_{n=1,2,\ldots,N}\max\{M-0,M-0\}
=M.
\end{align*}

First we construct an isometric embedding of $(A,d^N)$ into $(\M,\dGH)$.
Put $C=4M$, $D=C+2M+C$ and make an arbitrary point $x=(x_n)_{n=1}^N$ fixed.
Then we define $\K_x$ by the following subspace of $\R^2$:
\[
\K_x=\bigcup_{n=1}^N\{p_n^+(x)\}\cup\{p_n^-(x)\}\cup\square(n),
\]
where
\begin{align*}
p_n^\pm(x)=&(D(n-1),\pm x_n)\ (\mathrm{double\ sign\ in\ same\ order}),\\
\square(n)=&\Set{(a+(C+D(n-1)),b)}{a\in[0,2M],b\in[-M,M]}
\end{align*}
for $n=1,2,\ldots,N$.\\

\begin{figure}[h]
\centering
\includegraphics[width=0.9\textwidth]{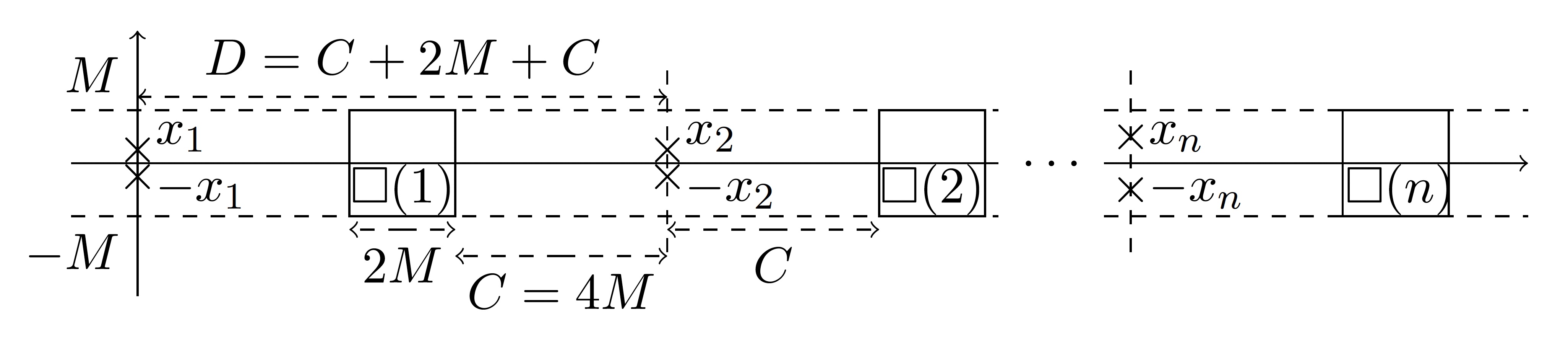}
\caption{The definition of $\K_x$.}
\end{figure}

\begin{figure}[h]
\centering
\includegraphics[width=0.9\textwidth]{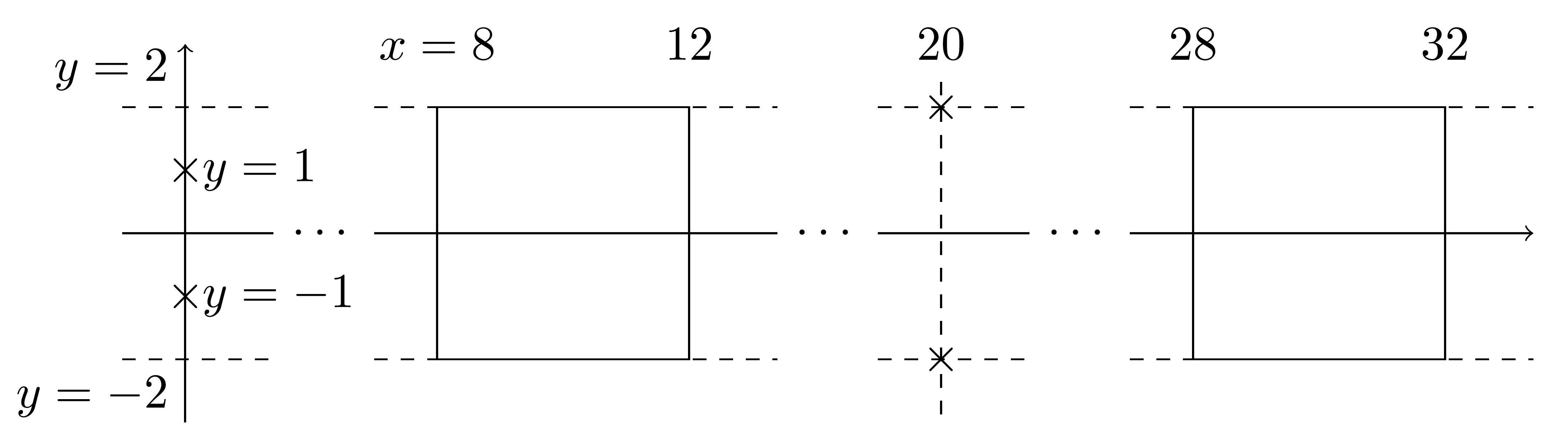}
\caption{In case of $N=2$, $M=2$, $x=(1,2)$}
\end{figure}

We have $\diam(\square(n))=2M$ for every $n=1,2,\ldots,N$ by direct calculation. 
In particular, two arbitrary vertices $P,Q$ of the square recognize its diameter: $d^2(P,Q)=2M$.
We shall use this fact to show that a constructed map preserves distance.\\

The subspace $(\K_x,d^2)$ of $\R^2$ is compact.
Indeed, all one-point sets and all squares are compact in $\R^2$ and $\K_x$ is a finite union of them.
Consequently we can choose $K(x)\in\M$ which is isometric to $\K_x$.
We consider $K(x)=\K_x$ for simplicity and denote the correspondence $x\mapsto K(x)$ by $K:A\rightarrow\M$.\\

We show that the map $K$ preserves distance.
Let $x=(x_n)_{n=1}^N$ and $y=(y_n)_{n=1}^N$ be two arbitrary fixed points in $A$.
It is necessary to show two inequalities
\[
\dGH(K(x),K(y))\leq d^N(x,y)
\]
and
\[
d^N(x,y)\leq\dGH(K(x),K(y)).
\]

We have $\dGH(K(x),K(y))\leq Hd^2(K(x),K(y))$ by the definition of $\dGH$.
Thus it is enough for the former to show $Hd^2(K(x),K(y))\leq d^N(x,y)$.
For an arbitrary point $p\in K(x)$, we shall find a suitable point $q\in K(y)$ such that $d^2(p,q)\leq d^N(x,y)$.
Since all squares $\square(n)$ is contained in $K(x)$ and $K(y)$, 
we can choose $q=p\in K(y)$ if $p$ belongs to a square $\square(n)$, which satisfies $d^2(p,q)=0\leq d^N(x,y)$.
If $p=p_n^\pm(x)\in K(x)$ for $n=1,2,\ldots,N$, 
then we can choose $q=p_n^\pm(y)\in K(y)$ (double sign in same order), and we have
\begin{align*}
d^2(p,q)
=&\max\{|D(n-1)-D(n-1)|,|\pm x_n-(\pm y_n)|\}\\
=&\max\{0,|x_n-y_n|\}
=| x_n-y_n|\leq d^N(x,y).
\end{align*}
Consequently, we obtain
\[
\sup_{p\in K(x)}\inf_{q\in K(y)}d^2(p,q)\leq d^N(x,y)
\]
as well as
\[
\sup_{q\in K(y)}\inf_{p\in K(x)}d^2(p,q)\leq d^N(x,y).
\]
It follows that $Hd^2(K(x),K(y))\leq d^N(x,y)$ by the definition of $Hd^2$.\\

For the latter inequality, suppose the contrary:
\[
\dGH(K(x),K(y))<d^N(x,y).
\]
Then we can choose a positive number $\delta<d^N(x,y)\leq M$ and a $2\delta$-isometry $F:K(x)\rightarrow K(y)$ according to Proposition \ref{GHmap}.\\

It is straightforward to see the following:
\begin{lemma}
\label{calculation}
We have the following for every $z\in A$, $P\in\square(n)$, $Q\in\square(m)$ and $s,t\in\{+,-\}$.
\begin{itemize}
\item[\rm{(1-1)}]
$d^2(p_n^+(z),p_n^-(z))\leq 2M$.
\item[\rm{(1-2)}]
$d^2(p_n^s(z),p_m^t(z))=D|n-m|$ if $n\neq m$.
\item[\rm{(2-1)}]
$\dist(p_n^\pm(z), \square(m))=C+D(m-n)$ if $n\leq m$.
\item[\rm{(2-2)}]
$\dist(p_n^\pm(z), \square(m))=D(n-m)-C-2M$ if $m+1\leq n$.
\item[\rm{(3)}]
$\dist(\square(n),\square(m))=D|n-m|-2M$ if $n\neq m$.
\item[\rm{(4)}]
$d^2(P,Q)\leq D|n-m|+2M$.
\item[\rm{(5)}]
$d^2(p_n^\pm(z),Q)\leq D-C$ if $m=n-1$.
\end{itemize}
\end{lemma}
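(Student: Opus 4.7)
The plan is to dispose of every item by a direct coordinate calculation using the Chebyshev formula $d^2((u_1,u_2),(v_1,v_2)) = \max(|u_1-v_1|,|u_2-v_2|)$. The key observation to record at the outset is that every point of $\K_x$ has second coordinate in $[-M,M]$: the points $p_n^\pm(z)$ because $0 \leq z_n \leq M$ by \eqref{assumption}, and the squares $\square(n)$ by construction. Consequently $|\Delta y| \leq 2M$ between any two points of $\K_x$, so whenever the horizontal gap strictly exceeds $2M$, the Chebyshev distance equals that horizontal gap. Since $D = C + 2M + C = 10M$ and $C = 4M$, the various horizontal separations will dominate the vertical extent $2M$ throughout.

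For (1-1), both points share $x$-coordinate $D(n-1)$ and the $y$-coordinates differ by $2z_n \leq 2M$. For (1-2), the $x$-coordinates differ by $D|n-m| \geq D > 2M$, which dominates the $y$-difference. Items (2-1), (2-2) and (3) are interval-gap computations: each square $\square(m)$ occupies the horizontal strip $[C+D(m-1),\ C+2M+D(m-1)]$, so for example in (2-1) with $n \leq m$ the horizontal gap between $p_n^\pm(z)$ and $\square(m)$ equals $C+D(m-n) \geq C > 2M$; since $\pm z_n \in [-M,M]$ one may pick the point of $\square(m)$ with matching $y$-coordinate, so this horizontal gap is attained and equals the Chebyshev distance. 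The analyses of (2-2) and (3) are identical in structure.

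For (4), with $P \in \square(n)$ and $Q \in \square(m)$, the horizontal coordinates differ by at most $D|n-m|+2M$ (which covers the case $n=m$) and the vertical difference is at most $2M$, yielding the stated upper bound. For (5), with $m=n-1$, the point $p_n^\pm(z)$ lies at $x=D(n-1)$ while any $Q \in \square(n-1)$ has $x$-coordinate in $[C+D(n-2),\ C+2M+D(n-2)]$, so $|\Delta x| \leq D-C$ and $|\Delta y| \leq 2M$; since $D-C = 6M \geq 2M$, the Chebyshev distance is at most $D-C$.

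There is no genuine obstacle: the lemma reduces to one-dimensional arithmetic once the coordinate ranges of the $p_n^\pm(z)$ and of the $\square(n)$ are written down, and its whole content is that the constants $C = 4M$ and $D = C + 2M + C$ have been chosen large enough that the horizontal structure dictates every distance. The only point requiring mild care is confirming in (2-1), (2-2) and (3) that the horizontal gap is non-negative and strictly exceeds $2M$ before identifying it with the Chebyshev distance, rather than only obtaining an upper bound.
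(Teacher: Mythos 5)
Your proof is correct, and it is exactly the direct coordinate computation the paper intends: the paper itself gives no proof of Lemma \ref{calculation}, merely declaring it ``straightforward to see.'' Your verification that the vertical spread of $\K_x$ is at most $2M$ while the relevant horizontal gaps are at least $C=4M$ (so that Chebyshev distances and set-distances are governed by the horizontal coordinate, attained by matching $y$-coordinates) fills in precisely the arithmetic the author omits.
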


We divide the argument among nine steps to obtain a contradiction.

\begin{step}
\label{step1}
For an arbitrary number $n=1,2,\ldots,N$, we shall prove that there exists a number $m=1,2,\ldots,N$ such that $F(\square(n))\subset\square(m)$.\\

Put $\square=\square(n)$. Firstly, for each point $P\in\square$, we show that there exists a number $m$ such that $F(P)\in\square(m)$ by contradiction.
Let $P\in\square$ be a point with $F(P)=p_m^\pm(y)$ $(m=1,2,\ldots,N)$, and suppose that we can find a point $Q\in\square$ with $F(Q)\in\square(k)$ $(k=1,2,\ldots,N)$ for the sake of contradiction.\\

We have $d^2(P,Q)\leq\diam(\square)=2M$ since $P,Q\in\square$, 
and
\begin{align*}
&d^2(F(P),F(Q))
\geq\dist(p_m^\pm(y),\square(k))\\
&\geq \begin{cases}
C+D(k-m)&\mathrm{if:} m\leq k,\\
D(m-k)-C-2M&\mathrm{if:} k+1\leq m
\end{cases}
\geq \begin{cases}
C+0&\mathrm{if:} m\leq k,\\
D-C-2M&\mathrm{if:} k+1\leq m
\end{cases}\\
&\geq C
\end{align*}
according to (2-1) and (2-2) of Lemma \ref{calculation}.
In particular, we obtain
\[
2M=C-2M\leq d^2(F(P),F(Q))-d^2(P,Q)\leq2\delta<2M
\]
since $F$ is $2\delta$-isometric. This is a contradiction.
Therefore we can not find a point $Q\in\square$ with $F(Q)\in\square(k)$ $(k=1,2,\ldots,N)$, i.e. for arbitrary point $Q\in\square$, there exist a number $k$ and a sign $s\in\{+,-\}$ such that $F(Q)=p_k^s(y)$.
In particular, we can show that $k=m$.
In fact, if $k\neq m$, then we have
\[
d^2(F(P),F(Q))=d^2(p_m^\pm(y), p_k^s(y))=D|m-k|\geq D\geq C
\]
according to (1-2) of Lemma \ref{calculation}.
This inequality and $d^2(P,Q)\leq2M$ lead to $2M<2M$ by a manner similar to above, which is a contradiction.
Thus, for arbitrary point $Q\in\square$, there exists a sign $s\in\{+,-\}$ such that $F(Q)=p_m^s(y)$, i.e. $F(\square)\subset\{p_m^+(y),p_m^-(y)\}$, which means that $F(\square)$ has two points at most. 
However, $\square$ has four vertices. 
It follows that there exist two vertices $Q,R\in\square$ such that $F(Q)=F(R)$.
Then we obtain
\[
2M=d^2(Q,R)\leq d^2(Q,R)-d^2(F(Q),F(R))\leq2\delta<2M
\]
since $F$ is $2\delta$-isometric. This is a contradiction.\\

Secondly, we show that two points $P,Q\in\square$ with $F(P)\in\square(m)$ and $F(Q)\in\square(k)$ $(k,m=1,2,\ldots)$ satisfy $k=m$.
Suppose the contrary: $m\neq k$. Then we have 
\[
d^2(F(P),F(Q))
\geq \dist(\square(m),\square(k))
\geq D-2M
\geq C
\]
according to (3) of Lemma \ref{calculation}. 
This inequality and $d^2(P,Q)\leq 2M$ lead to $2M<2M$ by a manner similar to above, which is a contradiction.\\

Consequently, for an arbitrary number $n$, there exists a number $m$ such that $F(\square(n))\subset\square(m)$.
\end{step}

\begin{step}
\label{step2}
As a result of step \ref{step1}, we can consider a map $\sigma:\{1,2,\ldots,N\}\rightarrow\{1,2,\ldots,N\}$ satisfying the following:
\[
F(\square(n))\subset\square(\sigma(n))
\hspace{2em}\mathrm{for\ every}\ n=1,2,\ldots,N.
\]
Here, we shall prove that the map $\sigma$ is injective,
and therefore the map $\sigma$ is bijective.\\

Suppose the contrary: there exist two distinct numbers $n$,$m$ such that $\sigma(n)=\sigma(m)$.
Then some points $P\in\square(n), Q\in\square(m)$ satisfy $F(P),F(Q)\in\square(\sigma (n))$ by the definition of $\sigma$.
Thus we have
\begin{align*}
2M
\leq& D-2M-2M
\leq\dist(\square(n),\square(m))-\diam(\square(\sigma(n)))\\
\leq& d^2(P,Q)-d^2(F(P),F(Q))
\leq2\delta
<2M
\end{align*}
by $F$ being $2\delta$-isometric and (3) of Lemma \ref{calculation}.
This is a contradiction.
\end{step}

\begin{step}
\label{step3}
For an arbitrary number $n$, we shall prove that there exists a number $m$ such that $F(p_n^\pm(x))\in\{p_m^+(y), p_m^-(y)\}$.\\

Suppose the contrary: $F(p_n^\pm(x))\in\square(m)$ $(m=1,2,\ldots,N)$, and 
take a leftmost point $P$ of $\square(n)$, which satisfies $d^2(p_n^\pm(x),P)=C$.\\

If $m=\sigma(n)$, then we have $F(P)\in\square(m)$ by the definition of $\sigma$, and therefore we have $d^2(F(p_n^\pm(x)),F(P))\leq2M$.
It follows that
\[
2M=C-2M\leq d^2(p_n^\pm(x),P)-d^2(F(p_n^\pm(x)),F(P))\leq2\delta<2M
\]
since $F$ is $2\delta$-isometric, which is a contradiction.\\

Thus $m\neq\sigma(n)$ and we have
\[
d^2(F(p_n^\pm(x)),F(P))\geq\dist(\square(m),\square(\sigma(n)))\geq D-2M=2C
\]
according to (3) of Lemma \ref{calculation}.
As a consequence, we have
\[
2M\leq2C-C\leq d^2(F(p_n^\pm(x)),F(P))-d^2(p_n^\pm(x),P)\leq2\delta<2M
\]
since $F$ is $2\delta$-isometric. This is a contradiction.\\

It follows that there exist two numbers $m,k$ and two signs $s,t\in\{+,-\}$ such that $F(p_n^+(x))=p_m^s(y)$ and $F(p_n^-(x))=p_k^t(y)$.
If $m\neq k$, then we have 
\[
d^2(p_n^+(x),p_n^-(x))\leq 2M
\]
and 
\[
d^2(F(p_n^+(x)),F(p_n^-(x)))=D|m-k|\geq C
\]
 according to (1-2) of Lemma \ref{calculation}.
These inequalities lead to $2M<2M$ by a manner similar to above, which is a contradiction.\\

Consequently, for an arbitrary number $n$, there exist a number $m$ and two signs $s,t\in\{+,-\}$ such that $F(p_n^+(x))=p_m^s(y)$ and $F(p_n^-(x))=p_m^t(y)$, i.e. $F(p_n^\pm(x))\in\{p_m^+(y), p_m^-(y)\}$.
\end{step}

\begin{step}
\label{step4}
As a result of step \ref{step3}, we can consider a map $\tau:\{1,2,\ldots,N\}\rightarrow\{1,2,\ldots,N\}$ satisfying the following:
\[
F(p_n^\pm(x))\in\{p_{\tau(n)}^+(y), p_{\tau(n)}^-(y)\}\hspace{2em}\mathrm{for\ every}\ n=1,2,\ldots,N.
\]
Here, we shall prove that the map $\tau$ is injective,
and therefore the map $\tau$ is also bijective.\\

Suppose the contrary: there exist two distinct numbers $n$,$m$ such that $\tau(n)=\tau(m)$. Then we have
\[
d^2(p_n^+(x),p_m^+(x))\geq D\geq C
\]
and
\[
d^2(F(p_n^+(x)),F(p_m^+(x)))\leq
\begin{cases}
0&\mathrm{if:}F(p_n^+(x))=F(p_m^+(x)),\\
2M&\mathrm{otherwise}
\end{cases}
\leq 2M
\]
according to (1-2) and (1-1) of Lemma \ref{calculation} and the definition of $\tau$. 
These inequalities and $F$ being $2\delta$-isometric lead to $2M<2M$ by a manner similar to above, which is a contradiction.
\end{step}

\begin{step}
\label{step5}
We shall prove:
\[
|\sigma(n)-\sigma(m)|=1\hspace{2em}\mathrm{for\ every}\ n,m=1,2,\ldots,N\ \mathrm{with} \ |n-m|=1.
\]

Suppose the contrary: there exist two numbers $n$, $m$ with $|n-m|=1$ such that 
$|\sigma(n)-\sigma(m)|\neq1$.
By $n\neq m$ and $\sigma$ being injective,
we have $|n'-m'|\geq2$, where $n'=\sigma(n)$ and $m'=\sigma(m)$.
Thus we have 
\[
\dist(\square(n'),\square(m'))=D|n'-m'|-2M\geq 2D-2M>C+D
\]
according to (3) of Lemma \ref{calculation}.
Taking some points $P\in\square(n)$, $Q\in\square(m)$, 
we obtain
\[
d^2(P,Q)\leq D|n-m|+2M=D+2M
\]
according to (4) of Lemma \ref{calculation}.
Therefore
\begin{align*}
2M
\leq&C-2M
=C+D-(D+2M)
<\dist(\square(n'),\square(m'))-d^2(P,Q)\\
\leq&d^2(F(P),F(Q))-d^2(P,Q)
\leq2\delta
<2M.
\end{align*}
This is a contradiction.
\end{step}

\begin{step}
\label{step6}
we shall prove:
\[
\tau(n)\in\{\sigma(n),\sigma(n)+1\}\hspace{2em}\mathrm{for\ every}\ n=1,2,\ldots,N.
\]

Suppose the contrary: there exists a number $n$ such that $m\notin\{k,k+1\}$, where $m=\tau(n)$ and $k=\sigma(n)$.
Taking a leftmost point $P$ of $\square(n)$, we have  $F(p_n^+(x))\in\{p_m^+(y), p_m^-(y)\}$, $F(P)\in\square(k)$ and $d^2(p_n^+(x),P)=C$ by the definition of $\sigma$, $\tau$ and $P$.
In particular,
\begin{align*}
&d^2(F(p_n^+(x)),F(P))
\geq\dist(F(p_n^+(x)),\square(k))\\
&=\begin{cases}
C+D(k-m)&\mathrm{if:}m\leq k-1,\\
D(m-k)-C-2M&\mathrm{if:}k+2\leq m
\end{cases}\\
&\geq\begin{cases}
C+D&\mathrm{if:}m\leq k-1,\\
2D-C-2M&\mathrm{if:}k+2\leq m
\end{cases}
=C+D
\end{align*}
according to (2-1) and (2-2) of Lemma \ref{calculation}.
Consequently we have
\[
2M\leq C+D-C\leq d^2(F(p_n^+(x)),F(P))-d^2(p_n^+(x),P)\leq2\delta<2M
\]
since $F$ is $2\delta$-isometric. This is a contradiction.
\end{step}

\begin{step}
\label{step7}
We shall prove that $\sigma$ and $\tau$ are identity maps.\\

Firstly, we show that $\sigma$ is the identity map $n\mapsto n$ or the map $n\mapsto N+1-n$.
Without loss of generality, we may assume $N\geq2$ by $\sigma$ being bijective.
Suppose that for an arbitrary number $n$ with $n+1\leq N$, we have 
\[
\sigma(n+1)=\sigma(n)\pm1.
\]
Giving a number $n$ with $n+2\leq N$ and $\sigma(n+2)=\sigma(n+1)\mp1$ (double sign in same order),
we have
\[
\sigma(n+2)=\sigma(n+1)\mp1=\sigma(n)\mp1\pm1=\sigma(n)\ \mathrm{(double\ sign\ in\ same\ order)}.
\]
This equality contradicts $\sigma$ being bijective.
Thus we obtain 
\[
\sigma(n+2)=\sigma(n+1)\pm1\ \mathrm{(double\ sign\ in\ same\ order)}
\]
for a number $n$ with $n+2\leq N$ by step \ref{step5}. 
It follows that
\begin{align*}
\sigma(n+1)=&\sigma(n)+1&\mathrm{if:} \sigma(2)=\sigma(1)+1&,\\
\sigma(n+1)=&\sigma(n)-1&\mathrm{if:} \sigma(2)=\sigma(1)-1&
\end{align*}
by induction.
In particular,
\begin{align*}
\sigma(n)=&n&\mathrm{if:} \sigma(2)=\sigma(1)+1&,\\
\sigma(n)=&N+1-n&\mathrm{if:} \sigma(2)=\sigma(1)-1&.
\end{align*}
We have $\sigma(2)=\sigma(1)\pm1$ by step \ref{step5}, and therefore 
$\sigma$ is  the identity map $n\mapsto n$ or the map $n\mapsto N+1-n$.\\

Secondly, we show that $\sigma$ is the identity map.
Suppose the contrary.
Then we have $N\geq2$ and $\sigma(n)=N+1-n$, and therefore we see that $\sigma(2)=N-1$ and $\tau(1)=N$ according to step \ref{step6}.
Let $P$ be a leftmost point of $\square(2)$, which satisfies $d^2(p_1^+(x),P)=C+D$.
Thus we have $F(P)\in\square(N-1)$ and $F(p_1^+(x))=p_N^\pm(y)$ by $\sigma(2)=N-1$, $\tau(1)=N$ and the definition of $\sigma$ and $\tau$.
It follows that $d^2(F(p_1^+(x)),F(P))\leq D-C$ according to (5) of Lemma \ref{calculation}.
Therefore
\begin{align*}
2M\leq&2C=C+D-(D-C)\\
\leq& d^2(p_1^+(x),P)-d^2(F(p_1^+(x)),F(P))\leq2\delta<2M
\end{align*}
since $F$ is $2\delta$-isometry. This is a contradiction.\\

Finally, we show that $\tau$ is the identity map.
Suppose the contrary: there exists a number $n_0$ such that $\tau(n_0)\neq n_0$.
We have $\sigma(n_0)=n_0$, $\sigma(n_0+1)=n_0+1$ since $\sigma$ is the identity map, and therefore we obtain $\tau(n_0)=n_0+1$ according to step \ref{step6}.
Then we see that $\tau(N-1)=N$ by induction and step \ref{step6}.
However, we have $\tau(N)=N$ by $1\leq\tau(N)\leq N$, $\sigma(N)=N$ and step \ref{step6} and therefore we have $\tau(N)=\tau(N-1)$.
This equality contradicts $\tau$ being injective.
\end{step}

\begin{step}
\label{step8}
There exists a number $k$ such that $d^N(x,y)=|x_k-y_k|$.
Here, we shall prove that $x_k<|x_k-y_k|$ and $y_k<|x_k-y_k|$.\\

For simplicity, we write $p_k^+(x)$ and $p_k^-(x)$ as $p_0$ and $p_1$, respectively.
As a result of step \ref{step7}, we have $F(p_0),F(p_1)\in\{p_k^+(y),p_k^-(y)\}$
and therefore we obtain the following by direct calculation:
\[
d^2(F(p_0),F(p_1))=2|y_k|=2y_k\hspace{2em}\mathrm{if}\ F(p_0)\neq F(p_1).
\]
as well as $d^2(p_0,p_1)=2|x_k|=2x_k$. 
Then we have
\[
2\delta\geq|d^2(p_0,p_1)-d^2(F(p_0),F(p_1))|=|2x_k-2y_k|=2|x_k-y_k|=2d^N(x,y)
\]
by the definition of $k$ and $F$ being $2\delta$-isometric.
This is a contradiction to $\delta<d^N(x,y)$.
Thus we obtain $F(p_0)=F(p_1)$.
Moreover we have
\begin{align*}
2x_k
=&d^2(p_0,p_1)
=|d^2(p_0,p_1)-d^2(F(p_0),F(p_1))|\\
\leq&2\delta
<2d^N(x,y)
=2|x_k-y_k|,
\end{align*}
i.e. $x_k<|x_k-y_k|$ by $d^2(F(p_0),F(p_1))=0$ and $F$ being $2\delta$-isometric.\\

Let $F(p_0)=F(p_1)=p_k^\pm(y)$. 
For the point $p_k^\mp(y)\in K(y)$ (double sign in same order), 
we can choose a point $r\in K(x)$ such that $d^2(F(r),p_k^\mp(y))\leq2\delta$ by $F$ being $2\delta$-surjective.
In patriclar, we obtain $F(r)\in\{p_k^+(y),p_k^-(y)\}$ by the definition of $K(y)=\K_y$.
Indeed, an arbitrary point $P\notin\{p_k^+(y),p_k^-(y)\}$ satisfies
$d(P,p_k^\mp(y))\geq C\geq2M>2\delta$.\\

As a result of step \ref{step7}, for an arbitrary point $p\in K(x)$, we obtain
\begin{align*}
F(p)\in&\square(n)&&\mathrm{if:} p\in\square(n)\ (n=1,2,\ldots,N),\\
F(p)\in&\{p_n^+(y),p_n^-(y)\}&&\mathrm{if:} p\in\{p_n^+(x),p_n^-(x)\}\ (n=1,2,\ldots,N).
\end{align*}
Namely, we see that $r\in F^{-1}[\{p_k^+(y),p_k^-(y)\}]\subset\{p_0,p_1\}$, and therefore we obtain $F(r)=p_k^\pm(y)$.
It follows that
\[
2y_k=d^2(F(r),p_k^\mp(y))\leq2\delta<2d^N(x,y)=2|x_k-y_k|,
\]
i.e. $y_k<|x_k-y_k|$.
\end{step}

\begin{step}
\label{step9}
We shall prove that step \ref{step8} leads to a contradiction.\\

Without loss of generality, we may assume $y_k<x_k$.
Then we have
\[
x_k<|x_k-y_k|=x_k-y_k
\]
by step \ref{step8},
and therefore $y_k<0$.
This is a contradiction to the assumption (\ref{assumption}), i.e. $0\leq y_k$.
\end{step}

As a consequence, $\dGH(K(x),K(y))<d^N(x,y)$ leads to a contradiction.
Therefore we have $d^N(x,y)\leq \dGH(K(x),K(y))$.
Hence, the map $K:A\rightarrow\M$ preserves distance.
\end{proof}

\begin{xrem}
In the proof of Main Theorem, we do not use a point belonging to the interior of $\square(n)$. In particular, we use only three of the vertices of $\square(n)$.
Therefore we can replace $\square(n)$ with 
\[
\square(n)=\Set{(a+(C+D(n-1)),b)}{(a,b)\in X},
\]
where $X$ is defined by each of the following:
\begin{align*}
X=&([0,2M]\times[-M,M])\setminus(]0,2M[\times]-M,M[),\\
X=&\{(0,-M),(0,M),(2M,-M),(2M,M)\},\\
X=&\{(0,-M),(0,M),(2M,0)\},
\end{align*}
and $]a,b[=\Set{x\in\R}{a<x<b}$.
\end{xrem}

The result of \cite{Tuzhilin1} can be proved as an application of Main Theorem.
\begin{corollary}
All finite metric spaces can be isometrically embedded into the Gromov-Hausdorff space $(\M,\dGH)$.
\end{corollary}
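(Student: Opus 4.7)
The plan is to reduce the statement to the Main Theorem by isometrically embedding an arbitrary finite metric space into $(\R^N, d^N)$ for a suitable $N$, and then observing that the image, being finite, is automatically bounded.

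More concretely, let $(X,d)$ be a finite metric space and enumerate its points $X=\{x_1,\ldots,x_N\}$. I would define the Fr\'echet-type map $\phi:X\to\R^N$ by
\[
\phi(x)=(d(x,x_1),d(x,x_2),\ldots,d(x,x_N))\hspace{2em}\mathrm{for}\ x\in X,
\]
and check that $\phi$ is an isometry when $\R^N$ carries the Chebyshev distance $d^N$. The upper bound $d^N(\phi(x),\phi(y))\leq d(x,y)$ follows coordinatewise from the reverse triangle inequality $|d(x,x_i)-d(y,x_i)|\leq d(x,y)$, and the lower bound is attained by taking the coordinate $i$ with $x_i=x$ (or $x_i=y$), which gives $|d(x,x_i)-d(y,x_i)|=d(x,y)$. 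Because the Chebyshev distance is a maximum over coordinates rather than a sum, this Fr\'echet-style construction works verbatim in the $d^N$-setting, which is precisely what is needed here.

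Once the isometry $\phi$ is in place, the image $\phi[X]\subset\R^N$ consists of finitely many points and is therefore bounded with respect to $d^N$, so the Main Theorem applies and supplies an isometric embedding $K:\phi[X]\to(\M,\dGH)$. The composition $K\circ\phi:X\to(\M,\dGH)$ is then the required isometric embedding. I do not expect any serious obstacle: the only point that requires care is that the target of the Fr\'echet-type embedding must be equipped with the Chebyshev (maximum) distance rather than the usual Euclidean one, so that the Main Theorem is directly applicable; the verification itself is routine.
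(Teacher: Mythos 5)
Your proposal is correct and follows essentially the same route as the paper: embed the finite space $\{x_1,\ldots,x_N\}$ into $(\R^N,d^N)$ by a Kuratowski/Fr\'echet-type map, note the image is bounded, and apply the Main Theorem. The only (immaterial) difference is that the paper subtracts the constants $d(x_i,x_1)$ in each coordinate, which does not change the Chebyshev distances.
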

\begin{proof}
Let $X=\{x_1,x_2,\ldots,x_n\}$ be an arbitrary finite metric space.
The following map is an isometric embedding of $X$ into $\R^n$:
\[
x\mapsto (d(x,x_1)-d(x_1,x_1),d(x,x_2)-d(x_2,x_1),\ldots,d(x,x_n)-d(x_n,x_1))
\]
for $x\in X$. Since $X$ has only finite points, the image of $X$ under this map is a bounded set in $\R^n$, which is isometric to $X$. According to Main Theorem, this image is isometric to a subspace of $\M$.
Consequently, $X$ is also isometric to the subspace of $\M$.
\end{proof}

Let $(M,g)$ be an $m$-dimensional Riemannian manifold.
If $M$ is connected, then we can define the {\it Riemannian distance} $d_M$ by the following:
\[
d_M(x,y)=\inf_{\gamma:[a,b]\rightarrow M}\int_a^b\sqrt{g_{\gamma(t)}(\dot{\gamma}(t),\dot{\gamma}(t))}\ dt
\hspace{2em}\mathrm{for}\ x,y\in M,
\]
where the infimum is taken over all piecewise continuous differentiable curves $\gamma:[a,b]\rightarrow M$ with $\gamma(a)=x$ and $\gamma(b)=y$.
The function $d_M$ is a distance function on $M$ and the topology determined by $d_M$ is the same as the original topology of $M$.\\

We can consider $\R^N$ equipped with the {\it standard Riemannian metric} $\langle\cdot,\cdot\rangle$ as an $N$-dimensional Riemannian manifold, which is defined by the following:
\[
\left\langle\sum_{i=1}^Na_i \frac{\partial}{\partial x^i},\sum_{j=1}^Nb_j\frac{\partial}{\partial x^j}\right\rangle_p=\sum_{i=1}^Na_ib_i
\hspace{2em}\mathrm{for}\ (a_i)_{i=1}^N,(b_i)_{i=1}^N\in\R^N\ \mathrm{and}\ p\in\R^N,
\]
where $(x^1,x^2,\ldots,x^n):\R^N\rightarrow\R^N$ is the standard coordinate on $\R^N$.
Then $d_{\R^N}$ is determined by the following, which is called  the {\it Euclidean distance}:
\[
d_{\R^N}((x_i)_{i=1}^N,(y_i)_{i=1}^N)=\sqrt{\sum_{i=1}^N(x_i-y_i)^2}
\hspace{2em}\mathrm{for\ every}\ (x_i)_{i=1}^N,(y_i)_{i=1}^N\in\R^N.
\]

The next theorem is well-known. See \cite{Nash} .

\begin{proposition}[Nash embedding theorem]
There exist a number $k=1,2,\ldots$ and a differentiable map $F:M\rightarrow \R^{m+k}$ such that  $F$ is an isometric embedding:
\[
d_M(x,y)=d_{\R^{m+k}}(F(x),F(y))
\hspace{2em}
\mathrm{\it for\ every}\ x,y\in M.
\]
\end{proposition}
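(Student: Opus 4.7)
The plan is to follow Nash's original strategy: start with a smooth embedding of $M$ into some Euclidean space and iteratively perturb it until its pullback metric equals $g$. By the Whitney embedding theorem there is a smooth embedding $F_0:M\to\R^{N_0}$ for some $N_0$; after dividing $F_0$ by a sufficiently large constant, we may assume $F_0$ is a \emph{short} embedding, meaning that the \emph{metric defect} $h_0=g-F_0^{*}\langle\cdot,\cdot\rangle$ is a positive-definite smooth symmetric $2$-tensor. The goal is then to recover an extra tensor $h_0$ of curvature information inside some larger $\R^{m+k}$ while perturbing $F_0$ only slightly.

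The next step is to write $h_0$ locally as a sum of rank-one tensors,
\[
h_0=\sum_\alpha a_\alpha^{2}\,d\psi_\alpha\otimes d\psi_\alpha,
\]
using a partition of unity together with the elementary fact that any positive-definite symmetric $2$-tensor decomposes locally as $\sum_i du_i\otimes du_i$ in appropriate coordinates. For each summand I would adjoin to $F$ a pair of new ``corrugation'' coordinates of the form $\bigl(\tfrac{a_\alpha}{\lambda}\cos(\lambda\psi_\alpha),\tfrac{a_\alpha}{\lambda}\sin(\lambda\psi_\alpha)\bigr)$. To leading order in $\lambda^{-1}$ these contribute exactly $a_\alpha^{2}\,d\psi_\alpha\otimes d\psi_\alpha$ to the pullback metric while moving the map itself by only $O(\lambda^{-1})$. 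Iterating yields a sequence of embeddings $F_j:M\to\R^{N_j}$ whose defects $h_j$ shrink, and taking a limit gives the desired isometric map into $\R^{m+k}$.

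The main obstacle is the famous \emph{loss of derivatives}: controlling $h_{j+1}$ in $C^{r}$ requires a bound on $F_j$ in $C^{r+2}$, so a naive iteration does not close. Nash's resolution is to interleave each perturbation step with a smoothing operator and to tune the parameters so that the scheme converges in the manner of a Newton iteration, the rapid quadratic decay of the residual absorbing the polynomial blow-up introduced by smoothing. This Nash--Moser style argument is the technical heart of the proof and is where I would expect to spend most of the work. For non-compact $M$ a further ingredient is needed: patch the construction on a locally finite exhaustion by relatively compact open sets, supporting each corrugation in an annular shell so that perturbations in disjoint shells do not interfere.
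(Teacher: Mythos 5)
The paper does not prove this proposition at all: it is quoted as a classical result with the remark ``The next theorem is well-known. See \cite{Nash}'', so there is no argument of the paper's to compare yours against. Your sketch follows the standard architecture of the classical proof (a short embedding via Whitney, decomposition of the metric defect into rank-one tensors by a partition of unity, oscillatory perturbations, and a Nash--Moser iteration to defeat the loss of derivatives), and as an outline it is accurate; but note that the spiral perturbation $\bigl(\tfrac{a}{\lambda}\cos(\lambda\psi),\tfrac{a}{\lambda}\sin(\lambda\psi)\bigr)$ belongs to the $C^1$ theory of Nash and Kuiper, whereas the smooth theorem is obtained by the implicit-function-theorem-with-smoothing route that you correctly identify as ``the technical heart'' but do not carry out. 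A sketch that defers the Nash--Moser scheme is not a proof of this theorem; for the purposes of this paper the honest move is exactly what the author does, namely to cite Nash.

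There is also a substantive mismatch between what your construction would deliver and the statement as written. Your argument (and Nash's theorem) produces $F$ with $F^{*}\langle\cdot,\cdot\rangle=g$, i.e.\ an isometric embedding in the Riemannian sense: lengths of curves are preserved, hence the \emph{intrinsic} (induced path) metric on $F[M]$ agrees with $d_M$. The displayed identity $d_M(x,y)=d_{\R^{m+k}}(F(x),F(y))$ asserts something stronger and generally false: that the ambient (chordal) Euclidean distance between image points equals the Riemannian distance. Already for the round circle or sphere embedded in the usual way, antipodal points have Riemannian distance $\pi$ but Euclidean distance $2$. What any completed version of your sketch gives is only the one-sided bound $d_{\R^{m+k}}(F(x),F(y))\leq d_M(x,y)$. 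So your proposal cannot establish the proposition as literally stated, because the proposition as literally stated is not a theorem; if you intend to prove the genuine Nash theorem you should restate the conclusion as $F^{*}\langle\cdot,\cdot\rangle=g$, equivalently as equality of $d_M$ with the length metric induced on the image.
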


On the other hand, the identity map $\id_{\R^N}:\R^N\rightarrow\R^N$ is a bilipschitz map from the metric space $(\R^N,d_{\R^N})$ to the metric space $(\R^N,d^N)$ since
\begin{equation}
\label{bilipschitz}
d^N(x,y)\leq d_{\R^N}(x,y)\leq \sqrt{N} d^N(x,y)
\hspace{2em}
\mathrm{for\ every}\ x,y\in\R^N.
\end{equation}
Therefore we obtain the following.

\begin{corollary}
Let $(M,g)$ be an $m$-dimensional and connected Riemannian manifold and 
$d_M$ the Riemannian distance on $(M,g)$.
If $M$ satisfies $\sup_{x,y\in M}d_M(x,y)<$~$\infty$, then
$(M,d_M)$ can be embedded into the Gromov-Hausdorff space $(\M,\dGH)$
by a bilipschitz map.
\end{corollary}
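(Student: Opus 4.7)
The plan is to chain together three ingredients already available in the paper: Nash's embedding theorem, the two-sided estimate (\ref{bilipschitz}) comparing the Euclidean and Chebyshev distances on $\R^N$, and the Main Theorem itself.

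First, invoke Nash's embedding theorem to obtain a positive integer $k$ and a differentiable isometric embedding $F:(M,d_M)\to(\R^{m+k},d_{\R^{m+k}})$, so that $d_M(x,y)=d_{\R^{m+k}}(F(x),F(y))$ for every $x,y\in M$. Because $\sup_{x,y\in M}d_M(x,y)<\infty$, the image $F(M)$ has finite Euclidean diameter, and by the right-hand inequality in (\ref{bilipschitz}) it is still bounded when we equip $\R^{m+k}$ with the Chebyshev distance $d^{m+k}$.

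Next, apply the Main Theorem to the bounded metric subspace $A=F(M)\subset(\R^{m+k},d^{m+k})$. This yields an isometric embedding $K:(F(M),d^{m+k})\to(\M,\dGH)$. Set $\Phi=K\circ F:M\to\M$. For every $x,y\in M$, combining the isometry properties of $F$ and $K$ with (\ref{bilipschitz}) gives
\[
\frac{1}{\sqrt{m+k}}\,d_M(x,y)\leq\dGH(\Phi(x),\Phi(y))\leq d_M(x,y),
\]
exhibiting $\Phi$ as a bilipschitz embedding of $(M,d_M)$ into $(\M,\dGH)$.

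There is no real obstacle here, since all the substantive work is absorbed in the Main Theorem and Nash's theorem. The only subtlety worth flagging is that Nash supplies an isometry for the Euclidean distance while the Main Theorem is stated for the Chebyshev distance; the mismatch of metrics is what prevents us from upgrading the conclusion to an outright isometric embedding, and the distortion $\sqrt{m+k}$ in the lower bound is forced precisely by this switch rather than by any inefficiency in the construction $K$.
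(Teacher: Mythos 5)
Your proposal is correct and follows exactly the same route as the paper: Nash embedding into $(\R^{m+k},d_{\R^{m+k}})$, boundedness of the image in the Chebyshev metric via (\ref{bilipschitz}), the Main Theorem applied to $F[M]$, and the composite two-sided estimate. The displayed inequality is just an algebraic rearrangement of the one the paper states, so there is nothing to add.
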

\begin{proof}
According to Nash embedding theorem, we can take a number $k$ and a map $F:M\rightarrow\R^{m+k}$ which is an isometric embedding of $(M,d_M)$ into $(\R^{m+k},d_{\R^{m+k}})$.
Then we see that the image $F[M]$ is a bounded set in the metric space $(\R^{m+k},d^{m+k})$
by the inequality (\ref{bilipschitz}) and the assumption of $M$. Therefore we can obtain a map $K:F[M]\rightarrow\M$ which is a isometric embedding of the subspace $(F[M],d^{m+k})$ into the Gromov-Hausdorff space $(\M,\dGH)$ according to Main Theorem.
Since two maps $F$ and $K$ preserve distance and the inequality (\ref{bilipschitz})
lead to the following:
\[
\dGH(K(F(x)),K(F(y)))\leq d_M(x,y)\leq \sqrt{m+k}\dGH(K(F(x)),K(F(y)))
\]
for every $x,y\in M$.
Consequently, the composite $K\circ F:M\rightarrow\M$ is bilipschitz.
\end{proof}
\section{Acknowledgments}
\*\indent This research was financially supported by the Kansai University Grant-in-Aid for progress of research in graduate course,2024.
The author acknowledges this support.
Dr.Yoshito Ishiki, Professor Toshihiro Shoda and Supervisor Atsushi Fujioka reviewed his paper and gave him encouragement.
Yoshito Ishiki especially gave him advice about some improvements on the definition of $\K_x$ and future challenges.
The author thanks them for their kindness.

\end{document}